\def\R{\mathbb R}
\def\Z{\mathbb Z}
\def\C{\mathbb C}
\def\Xa{\mathcal X}
\def\Fq{\mathbb{F}_q}
\def\Fp{\mathbb{F}_p}
\def\lcm{\mathop{\rm lcm}}
\newtheorem{theorem}{Theorem}[section]
\newtheorem{lemma}[theorem]{Lemma}
\newtheorem{definition}[theorem]{Definition}
\newtheorem{proposition}[theorem]{Proposition}
\theoremstyle{definition}
\author[J. A. Oliveira]{Jos\'e Alves Oliveira}
\address{Departamento de Matem\'{a}tica,
	Universidade Federal de Minas Gerais,
	UFMG,
	Belo Horizonte MG (Brazil),
	30123-970}
\email{joseufmg@gmail.com}
\date{\today
}
\keywords{Finite fields, Gauss Sums, Jacobi Sums, diagonal equations}
\subjclass[2020]{12E20 (primary) and 11T24 (secondary)}
\title{On maximal and minimal hypersurfaces of Fermat type}
\begin{document}

\begin{abstract}
	Let $\Fq$ be a finite field with $q=p^n$ elements. In this paper, we study the number of $\Fq$-rational points on the affine hypersurface $\mathcal X$ given by $a_1 x_1^{d_1}+\dots+a_s x_s^{d_s}=b$, where $b\in\Fq^*$. A classic well-konwn result from Weil yields a bound for such number of points. This paper presents necessary and sufficient conditions for the maximality and minimality of $\mathcal X$ with respect to Weil's bound. 
\end{abstract}

\maketitle

\section{Introduction} 

Let $\Fq$ be a finite field with $q=p^n$ elements, where $p$ is a prime and $n$ is a positive integer. Let $(a_1,\dots,a_s)\in\Fq^s$, $(d_1,\dots,d_s)\in\Z_+^s$ and $b\in\Fq^*$, and let $\Xa\subset \mathbb{A}^s$ be an irreducible Fermat hypersurface defined over $\Fq$ given by
\begin{equation}\label{item500}
\Xa:a_1 x_1^{d_1}+\cdots+a_s x_s^{d_s}=b.
\end{equation}
If $\Xa(\Fq)$ denotes the number of $\Fq$-rational points on $\Xa$ and $b\in\Fq^*$, then the famous Weil bound~\cite{weil1949numbers} entails that
\begin{equation}\label{item502}
\left|\Xa(\Fq)-q^{s-1}\right|\le  q^{(s-2)/2}\left[\sqrt{q}\prod_{i=1}^{s}(d_i-1)-(\sqrt{q}-1)I(d_1,\dots,d_s)\right],
\end{equation}
where $I(d_1,\dots,d_s)$ is the number of $s$-tuples $(y_1,\dots,y_s)\in\Z^s$, with $1\leq y_i\leq d_i-1$ for all $i=1,\dots,s$, such that
\begin{equation}\label{item503}
\frac{y_1}{d_1}+\dots+\frac{y_s}{d_s}\equiv 0\pmod{1}.
\end{equation}
If the number of $\Fq$-rational points on $\Xa$ attains this upper (lower) bound, then it is called a maximal (minimal) hypersurface. In general, maximal hypersurfaces are of great interest due to their wide use for applications in coding theory~\cite{aubry1992reed,stepanov2012codes}. Indeed, the problem of studying the number of points on $\Xa$ has been tackled for many authors. Although the exact number of points on $\Xa$ has been studied in many papers \cite{cao2007factorization,OLIVEIRA2021101927,wolfmann1992number,cao2016number}, the complete characterization of the maximality and minimality of it with respect to Weil's bound has not been provided. The particular case where $\Xa$ is a curve has been studied for some authors \cite{garcia2008certain,tafazolian2013maximal}. For example, maximal and minimal Fermat type curves of the form $x^n+y^m=1$ were studied in \cite{tafazolian2010characterization} and \cite{tafazolian2013maximal}.

More recently, we studied the number of $\Fq$-rational points on $\Xa$ in  a very general setting \cite{OLIVEIRA2021101927}. In particular, we used results from~\cite{shioda1979fermat} regarding Jacobi sums to study the maximality and minimality of $\Xa$ in the case where $d_1=\dots=d_s$. It turned out that the maximal and minimal hypersurfaces given by the Equation~\eqref{item500} satisfy a very natural condition, which is being covered by a Hermitian type hypersurface defined over $\Fq$ given by
$$\mathcal H_r: x_1^{p^r+1}+\dots+x_s^{p^r+1}=1,$$
where $r$ is a positive integer. The aproach used in~\cite{shioda1979fermat} seems not to be applicable in the general situation where $d_1,\dots,d_s$ are distintic, so that we could not characterize the maximality and minimality in the general setting. Our goal in this paper is to provide tools that allow us to study the maximality and minimality of $\Xa$ under some mild conditions. Along the Section~\ref{item508} we provide a new approach to tackle this problem. The main ingredients we use are the celebrated Davemport-Hasse Relation and a purity result for Jacobi sums. In Theorem~\ref{item504} we provide necessary and sufficient conditions in which the number of $\Fq$-rational points on $\Xa$ attains the bound~\eqref{item502}. In particular, we will prove that the condition of being covered by the Hermitian curve remains being necessary in this more general setting. In order to do that, the following definition will be important.
\begin{definition}
	Let $r$ be a positive integer. We say that an integer $d$ is $(p,r)$-admissible if $d\mid (p^r+1)$ and there exists no $r'<r$ such that $d\mid (p^{r'}+1)$.
\end{definition}

Throughout the paper, $s\geq 2$ is an integer, $d_1,\dots,d_s$ are non-negative integers $\ge 2$ dividing $q-1$ and $a_1,\dots,a_s,b$ are elements in $\Fq$ with $b\neq 0$. For $d$ a divisor of $q-1$, let $\chi_d$ be a multiplicative character of order $d$ of $\Fq^*$.

The main result of the paper is the following.

\begin{theorem}\label{item504} Assume that $s\neq 3$ and suppose that $\gcd(d_1,\dots,d_s)>2$ if $s>3$. Then the number of $\Fq$-rational points on $\Xa$ attains the bound \eqref{item502} if and only if
	\begin{itemize}
		\item each $d_i$ is $(p,r)$-admissible;
		\item $\tfrac{n}{2r}$ is even;
		\item $\chi_{d_i}(a_i)=\chi_{d_i}(b)$ for all $i=1,\dots,s$.
	\end{itemize}
Furthermore, $\Xa$ is maximal if $s$ is odd and minimal otherwise.
\end{theorem}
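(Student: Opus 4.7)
The plan is to express $\Xa(\Fq)-q^{s-1}$ as a weighted sum of Gauss sum products and then determine exactly when equality in the triangle inequality is achieved. Starting from the standard identity
$$\Xa(\Fq)=q^{s-1}+\frac{1}{q}\sum_{\vec{\chi}}\overline{\chi_1}(a_1)\cdots\overline{\chi_s}(a_s)\,(\chi_1\cdots\chi_s)(-b)\,G(\chi_1)\cdots G(\chi_s)\,G(\overline{\chi_1\cdots\chi_s}),$$
where the sum runs over tuples of nontrivial multiplicative characters with $\chi_i^{d_i}=1$, I would split the sum according to whether $\chi_1\cdots\chi_s$ is trivial or not. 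Tuples with trivial product are in bijection with the $I(d_1,\dots,d_s)$ integer tuples satisfying \eqref{item503} and yield summands of modulus $q^{(s-2)/2}$ (since $G$ of the trivial character equals $-1$), while the remaining tuples yield summands of modulus $q^{(s-1)/2}$; adding these reproduces precisely the right-hand side of \eqref{item502}.

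For sufficiency, the main tool is the Davenport--Hasse lifting relation. If $d_i\mid p^r+1$, then every nontrivial Gauss sum $G(\chi_{d_i}^{j_i})$ over $\F_{p^{2r}}$ is a real integer of the form $\pm p^r$, and the Davenport--Hasse relation lifts each such Gauss sum to a rational Gauss sum of absolute value $q^{1/2}$ over $\Fq$ whenever $2r\mid n$. Under the hypothesis $\chi_{d_i}(a_i)=\chi_{d_i}(b)$, the twisting factor $\overline{\chi_i}(a_i)(\chi_1\cdots\chi_s)(-b)$ collapses coherently across all tuples, so every summand becomes a fixed real number with a common sign determined by $s$ and the parity of $n/(2r)$. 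Requiring $n/(2r)$ to be even then pins down the sign and yields maximality for $s$ odd and minimality for $s$ even, exactly as claimed.

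The necessity direction runs in reverse. Equality in \eqref{item502} forces every single summand in the Jacobi sum expansion to be real of maximal modulus and to have a common sign. By isolating tuples in which all but one (or all but a few) indices are specialized, one can distill the purity condition onto individual Gauss sums $G(\chi_{d_i}^{j_i})$. A Gauss sum over $\Fq$ is rational exactly when its character has order dividing some $p^r+1$ with $2r\mid n$ (a Stickelberger-type purity criterion), which yields the $(p,r)$-admissibility of each $d_i$; taking the minimal such $r$ across all indices and using the coherence condition on cross-terms forces a single common $r$ to work for every $d_i$. The uniform-sign condition on the surviving real summands then extracts $\chi_{d_i}(a_i)=\chi_{d_i}(b)$, and the overall sign of the Gauss sums dictates that $n/(2r)$ must be even.

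The central obstacle is coordinating the purity condition across all $s$ characters and across the exponents $(j_1,\dots,j_s)$: the purity of a single Jacobi sum constrains only the product character, and extracting information about each individual $d_i$ requires isolating tuples in which several characters vary simultaneously. This is exactly where the assumption $\gcd(d_1,\dots,d_s)>2$ for $s>3$ is needed, as it guarantees enough admissible auxiliary tuples to solve for the individual characters, and it is also the combinatorial reason the case $s=3$ has to be excluded, the dimension being too low for the required comparison arguments to close.
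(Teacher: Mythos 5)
Your outline follows the same overall strategy as the paper (character-sum expansion, equality in the triangle inequality forcing all twisted Jacobi summands onto one ray, Davenport--Hasse, a purity criterion for Gauss sums, then sign bookkeeping), but the two steps that carry the actual weight of the theorem are asserted rather than proved, and one of them rests on an incorrect criterion. First, ``isolating tuples in which all but one index are specialized'' does not distill purity onto an individual Gauss sum: purity of $J\big(\chi_{d_1}^{\ell_1},\dots,\chi_{d_s}^{\ell_s}\big)=G\big(\chi_{d_1}^{\ell_1}\big)\cdots G\big(\chi_{d_s}^{\ell_s}\big)/G\big(\chi_{d_1}^{\ell_1}\cdots\chi_{d_s}^{\ell_s}\big)$ only constrains this quotient, and no specialization of the remaining indices removes the denominator or the other factors. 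The paper's Proposition~\ref{item534} gets around this by pairing two Jacobi sums built from an auxiliary zero-sum tuple $(m_3,\dots,m_s)$ (whose existence is exactly what the hypotheses $s\neq 3$, $\gcd(d_1,\dots,d_s)>2$ guarantee via Lemma~\ref{item240}), then multiplying over $\ell_2$ and invoking the Davenport--Hasse product relation (Lemma~\ref{item529}) to obtain $G\big(\chi_{d_1}^{\ell_1}\big)^{d_2}/G\big(\chi_{d_1}^{\ell_1 d_2}\big)\in\Omega$, followed by an iteration over the orbit of $\ell_1$ under multiplication by $d_2$ modulo $d_1$. Nothing in your sketch supplies a substitute for this mechanism, and it is precisely the nontrivial content of the necessity direction; a further argument (the $\lcm$ claim in Theorem~\ref{item537}) is still needed afterwards to force a single common $r$ for all $d_i$.

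Second, the purity criterion you invoke --- ``a Gauss sum over $\Fq$ is rational exactly when its character has order dividing some $p^r+1$ with $2r\mid n$'' --- is not correct for a single Gauss sum: purity (some power real) of one $G(\chi)$ does not imply that the order of $\chi$ divides $p^r+1$ (Yamamoto exhibited pure Gauss sums outside this family), and rationality is a different, stronger condition than the purity that equality in \eqref{item502} actually yields. What is true, and what the paper uses (Theorem~\ref{item520}, due to Evans), is that $G(\chi_d^j)$ is pure for \emph{all} $j$ if and only if $d$ is $(p,r)$-admissible; this is why the paper must first establish purity of every power $G\big(\chi_{d_i}^{\ell}\big)$, not just of the exponents appearing in a convenient tuple. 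Finally, the remaining claims --- that the hypothesis $\chi_{d_i}(a_i)=\chi_{d_i}(b)$ makes all summands ``collapse coherently,'' that the common sign forces these character conditions in the converse, and that the parity of $n/(2r)$ and of $s$ determines maximality versus minimality --- require an explicit sign determination; the paper performs it by reducing to $y_1^{d_1}+\cdots+y_s^{d_s}=1$ and substituting into the closed point-count formula of Theorem~\ref{item506} together with the identity for $I(d_1,\dots,d_s)$ in Lemma~\ref{item539}. Your proposal names the right ingredients but, as written, leaves the decisive steps unproved.
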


It is worth mentioning that a hypersurface is maximal (or minimal) if and only the inverses of all roots of the associated Zeta function are equal to $-q^{\frac{s-1}{2}}$ and $-q^{\frac{s-2}{2}}$ (or $q^{\frac{s-1}{2}}$ and $q^{\frac{s-2}{2}}$). In particular, in the cases where the conditions of Theorem~\ref{item504} are satisfied, the Zeta function associated to $\Xa$ is provided. Furthermore, our result generalizes many results of the literature, such as the main results of \cite{tafazolian2010characterization,tafazolian2013maximal}, Theorem 4.4 of~\cite{garcia2008certain} and Theorem 2.14 of \cite{OLIVEIRA2021101927}. As we will see in Section~\ref{item508}, the hypotesis imposed here are very important in our approach and are very difficult to ride out. For example, a characterization for maximal and minimal Fermat hypersurfaces in case where $b=0$ remains being an open problem. More results concerning the number of points on these type of hypersurfaces can be found in \cite{Panario}, \cite{OLIVEIRA2021101927} and in the references therein.

\section{Preliminaries}\label{item544}
In this section we provide some well-known definitions and results that will be important in the proof of the main result. Throughout the paper, let $\psi$ be the canonical additive character from $\Fq$ to $\Fp$.

\begin{definition}
	Let $\lambda_1,\dots,\lambda_s$ be a multiplicative characters of $\Fq^*$.
	\begin{enumerate}[label=(\alph*)]
		
		\item  The {\it Gauss sum} of $\lambda_1$ over $\Fq$ is the sum $$G(\lambda_1)=\sum_{x\in\Fq^*}\psi(x)\lambda_1(x).$$
		\item Let $b\in\Fq$. The \textit{Jacobi sum} of $\lambda_1,\ldots, \lambda_s$ is defined as 
		$$J_b(\lambda_1,\ldots,\lambda_s)=\sum_{\substack{b_1+\cdots b_s=b\\ (b_1,\ldots,b_s)\in\Fq^s}}\lambda_1(b_1)\cdots\lambda_s(b_s);$$
	\end{enumerate}
\end{definition}

One can verify that 
$$J_b(\lambda_1,\ldots,\lambda_s)=\lambda_1(b)\dots\lambda_s(b)J_1(\lambda_1,\ldots,\lambda_s)$$
for all $b\in\Fq^*$, fact that will be extensively used in the paper. Throughout the paper, we set $J(\lambda_1,\ldots,\lambda_s)=J_1(\lambda_1,\ldots,\lambda_s)$.

\begin{definition}
	 A Gauss sum  $G(\lambda_1)$ (or a Jacobi sum $J_b(\lambda_1,\ldots,\lambda_s)$) is said to be {\it pure} if some non-zero integral power of it is real.
\end{definition}

\begin{theorem}\cite[Theorem $5.21$]{Lidl}\label{item523}
If $\lambda_1,\dots, \lambda_s$ are nontrivial multiplicative characters of $\Fq^*$, then
$$J(\lambda_1,\dots,\lambda_s)=\begin{cases}
\tfrac{G(\lambda_1)\cdots G(\lambda_s)}{G(\lambda_1,\dots,\lambda_s)},&\text{ if }\lambda_1\cdots\lambda_s\text{ is nontrivial;}\\
-q^{-1}G(\lambda_1)\dots G(\lambda_s),&\text{ if }\lambda_1\cdots\lambda_s\text{ is trivial.}\\
\end{cases}$$
\end{theorem}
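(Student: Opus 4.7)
The plan is to analyse the Weil-type character expansion of $|\Xa(\Fq)|$ and decide when the triangle inequality saturating~\eqref{item502} becomes an equality. I would begin with the identity
$$|\Xa(\Fq)|-q^{s-1}=\sum_{\substack{1\le j_i\le d_i-1\\ 1\le i\le s}}\Bigl[\prod_{i=1}^{s}\chi_{d_i}^{j_i}(ba_i^{-1})\Bigr]\,J\!\left(\chi_{d_1}^{j_1},\dots,\chi_{d_s}^{j_s}\right),$$
and split the outer sum according as $\prod_{i}\chi_{d_i}^{j_i}$ is trivial or not. By Theorem~\ref{item523} and $|G(\lambda)|=\sqrt q$, the summands have absolute value $q^{(s-2)/2}$ and $q^{(s-1)/2}$ respectively, and the number of trivial-product tuples equals $I(d_1,\dots,d_s)$; the triangle inequality thus reproduces~\eqref{item502} on the nose, attained iff every summand $c_\alpha J_\alpha$ (with $c_\alpha=\prod_{i}\chi_{d_i}^{j_i}(ba_i^{-1})$) is a common-signed real multiple of its modulus.

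Two alignments must then occur. For the \emph{Jacobi-sum} alignment, each $J_\alpha$ must be real, which, via Theorem~\ref{item523}, amounts to purity of every Gauss sum $G(\chi_{d_i}^{j_i})$ appearing. Invoking the classical purity criterion (Stickelberger combined with Davenport--Hasse), $G_\Fq(\chi_d)$ is pure precisely when $d\mid p^r+1$ for some $r$ with $2r\mid n$, and the minimal such $r$ is exactly the $(p,r)$-admissibility index of $d$. A Davenport--Hasse lift through the tower $\F_{p^{2r}}\subset\Fq$ gives
$$G_\Fq(\chi_{d_i}^{j_i})=(-1)^{n/(2r)-1}\,G_{\F_{p^{2r}}}(\theta_i)^{n/(2r)},$$
with $\theta_i$ a character of order $d_i$ on $\F_{p^{2r}}$, and the classical evaluation $G_{\F_{p^{2r}}}(\theta_i)=-p^{r}$ forces each $G_\Fq(\chi_{d_i}^{j_i})$ to equal $-q^{1/2}$ exactly when $n/(2r)$ is even---supplying the second listed condition. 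Plugging into Theorem~\ref{item523} turns every nontrivial $J_\alpha$ into $(-1)^{s-1}q^{(s-1)/2}$ or $(-1)^{s+1}q^{(s-2)/2}$, a common sign depending only on the parity of $s$. For the \emph{coefficient} alignment, constancy of $c_\alpha$ across admissible tuples is required; since $c_\alpha$ factors coordinatewise and every $d_i\ge3$ (under $\gcd(d_1,\dots,d_s)>2$) permits varying $j_i$ independently in $\{1,\dots,d_i-1\}$, this forces $\chi_{d_i}(ba_i^{-1})=1$. Summing then yields maximality for $s$ odd and minimality for $s$ even, settling sufficiency.

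Necessity is obtained by reversing each step: equality of the triangle inequality enforces that every $c_\alpha J_\alpha$ is a positive real multiple of a common complex number, hence every $J_\alpha$ is pure, whence each Gauss sum $G(\chi_{d_i}^{j_i})$ is pure, giving coordinatewise $(p,r_i)$-admissibility together with parity of $n/(2r_i)$; the constancy of $c_\alpha$ across varied tuples then yields $\chi_{d_i}(ba_i^{-1})=1$.

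The principal obstacle will be extracting from this local picture the \emph{common} admissibility exponent $r$, i.e.\ $r_1=\cdots=r_s$. Purity of each individual Gauss sum supplies only the coordinatewise $r_i$'s, and forcing them to coincide requires comparing the Davenport--Hasse lifts across pairs of coordinates by running the alignment on admissible tuples that couple two indices with nontrivial characters of order $>2$. This is precisely what the hypotheses $s\ne3$ and $\gcd(d_1,\dots,d_s)>2$ for $s>3$ are designed to guarantee: they provide a sufficiently rich family of admissible tuples to couple every pair of coordinates and propagate the common $r$. In the excluded cases the supply of probing tuples degenerates---for $s=3$ the admissibility constraint is a single linear equation that pins tuples down too rigidly, and for $\gcd=2$ some coordinate admits only the quadratic character, whose Gauss sum obeys separate sign conventions---so the common-$r$ conclusion fails without extra input.
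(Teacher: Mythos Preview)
Your proposal does not address the stated result at all. Theorem~\ref{item523} is the classical identity expressing a Jacobi sum as a ratio or product of Gauss sums, and the paper does not prove it: it is simply quoted from Lidl--Niederreiter (Theorem~5.21) as a preliminary fact. What you have written is instead a proof sketch for the paper's \emph{main} result, Theorem~\ref{item504}, on when the Fermat-type hypersurface $\Xa$ is maximal or minimal. For the statement actually in question there is therefore nothing to compare; the paper offers no proof, and neither do you.

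If your intent was really Theorem~\ref{item504}, then your outline follows the same overall strategy as the paper (character expansion, saturation of the triangle inequality, purity of Jacobi sums forcing admissibility, coefficient alignment giving the character condition), but it contains a genuine gap. You assert that purity of every Jacobi sum $J_\alpha$ yields purity of each individual Gauss sum $G(\chi_{d_i}^{j_i})$ ``via Theorem~\ref{item523}''. It does not: Theorem~\ref{item523} writes $J_\alpha$ as a product or ratio of $s$ or $s+1$ Gauss sums, and purity of such a combination does not decouple into purity of the factors. The paper devotes Proposition~\ref{item534} to exactly this step, first pairing Jacobi sums over complementary tuples to isolate the quotient $G(\chi_{d_1}^{\ell_1})^2 G(\chi_{d_2}^{\ell_2})^2/G(\chi_{d_1}^{\ell_1}\chi_{d_2}^{\ell_2})^2$, then applying the Davenport--Hasse product relation (Lemma~\ref{item529}) and a telescoping product over powers of $d_2$ to extract a single $G(\chi_{d_1}^{\ell})$. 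You correctly identify the common-$r$ problem as the principal obstacle but provide no mechanism beyond ``coupling tuples''; the paper's Theorem~\ref{item537} resolves it by showing $G(\chi_D^t)$ is pure for $D=\lcm(d_1,d_2)$ and then invoking Theorem~\ref{item520} on $D$ itself.
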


\begin{proposition}\cite[Theorem $5.22$]{Lidl}\label{item519} Let $\lambda_1,\ldots,\lambda_s$ be nontrivial multiplicative characters of $\Fq^*$, then 
	$$|J(\lambda_1,\ldots,\lambda_s)|=\begin{cases}
	q^{\frac{s-1}{2}},&\text{ if } \lambda_1\cdots\lambda_s\text{ is nontrivial;}\\
	q^{\frac{s-2}{2}},&\text{ if }\lambda_1\cdots\lambda_s\text{ is trivial.}\\
	\end{cases}$$
\end{proposition}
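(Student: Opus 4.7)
The plan is to reduce the statement directly to the standard evaluation $|G(\lambda)|=\sqrt{q}$ for any nontrivial multiplicative character $\lambda$ of $\Fq^*$, and then apply Theorem~\ref{item523} to express $J(\lambda_1,\ldots,\lambda_s)$ purely in terms of Gauss sums.

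First I would recall the absolute value of a Gauss sum. Computing $G(\lambda)\overline{G(\lambda)}$ by expanding the definition and making the change of variables $x=ty$ gives
$$|G(\lambda)|^2=\sum_{x,y\in\Fq^*}\lambda(x)\overline{\lambda(y)}\psi(x-y)=\sum_{t\in\Fq^*}\lambda(t)\sum_{y\in\Fq^*}\psi(y(t-1)).$$
The inner sum equals $q-1$ when $t=1$ and $-1$ otherwise, using the standard orthogonality $\sum_{y\in\Fq}\psi(yz)=0$ for $z\ne 0$. Combined with $\sum_{t\in\Fq^*}\lambda(t)=0$ (valid since $\lambda$ is nontrivial), this yields $|G(\lambda)|^2=q$.

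With this in hand, the result is immediate from Theorem~\ref{item523}. In the case that $\lambda_1\cdots\lambda_s$ is nontrivial, the theorem expresses $J(\lambda_1,\ldots,\lambda_s)$ as a ratio of a product of $s$ nontrivial Gauss sums to one more nontrivial Gauss sum, so taking absolute values gives $|J|=(\sqrt{q})^s/\sqrt{q}=q^{(s-1)/2}$. In the case that $\lambda_1\cdots\lambda_s$ is trivial, the theorem instead gives $J(\lambda_1,\ldots,\lambda_s)=-q^{-1}G(\lambda_1)\cdots G(\lambda_s)$; since each individual $\lambda_i$ is nontrivial by hypothesis, taking absolute values yields $|J|=q^{-1}\cdot q^{s/2}=q^{(s-2)/2}$.

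I do not expect a substantive obstacle: once $|G(\lambda)|=\sqrt{q}$ is available, the proposition follows from Theorem~\ref{item523} by a one-line computation in each case. The only point demanding care is the bookkeeping that every individual $\lambda_i$ is nontrivial (so the absolute-value identity applies to each factor $G(\lambda_i)$), and that the denominator $G(\lambda_1\cdots\lambda_s)$ appearing in the first branch of Theorem~\ref{item523} comes from a nontrivial character, which is precisely the hypothesis of that branch.
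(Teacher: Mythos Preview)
Your argument is correct. Note, however, that the paper does not supply its own proof of this proposition; it merely cites it as Theorem~5.22 of \cite{Lidl}. Your derivation is the standard one and is essentially how the statement follows from the preceding Theorem~\ref{item523} together with Lemma~\ref{item501}\ref{item524}, both of which the paper likewise quotes from \cite{Lidl} without proof, so there is nothing to compare against.
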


\begin{lemma}\cite[Theorems 5.11 and 5.12]{Lidl}\label{item501} 
	Let $\chi_0$ denote the trivial multiplicative character of $\Fq^*$. If $\chi\neq\chi_0$, then
	\begin{enumerate}[label=(\alph*)]
		\item\label{item524} $|G(\chi)|=\sqrt{q};$
		\item\label{item522} $G(\chi)G(\overline{\chi})=\chi(-1)q$;
		\item\label{item531} $G(\chi_0)=-1$.
	\end{enumerate}
\end{lemma}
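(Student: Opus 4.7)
The plan is to dispatch parts (c), (b), (a) in that order, since (c) is a one-line calculation, (b) requires the standard substitution trick for Gauss sums, and (a) drops out of (b) by complex conjugation.

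For (c), the idea is simply to unfold the definition: since $\chi_0(x)=1$ for every $x\in\Fq^*$, we have $G(\chi_0)=\sum_{x\in\Fq^*}\psi(x)$. Because $\psi$ is the \emph{canonical} (hence nontrivial) additive character of $\Fq$, the orthogonality relation $\sum_{x\in\Fq}\psi(x)=0$ applies, and isolating the $x=0$ term (which contributes $\psi(0)=1$) yields $G(\chi_0)=-1$.

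For (b), I would expand the product and perform a linear change of variables. Writing
$$G(\chi)G(\overline{\chi})=\sum_{x,y\in\Fq^*}\psi(x+y)\chi(x)\overline{\chi}(y),$$
I substitute $x=ty$ with $t\in\Fq^*$, so that $\chi(x)\overline{\chi}(y)=\chi(t)$, and the double sum factors as
$$\sum_{t\in\Fq^*}\chi(t)\sum_{y\in\Fq^*}\psi((t+1)y).$$
The inner sum equals $q-1$ when $t=-1$ and $-1$ when $t\neq -1$, by the same orthogonality used in (c). Therefore the expression becomes $\chi(-1)(q-1)-\sum_{t\in\Fq^*,\,t\neq -1}\chi(t)=\chi(-1)q-\sum_{t\in\Fq^*}\chi(t)$, and the last sum vanishes because $\chi$ is nontrivial. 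This gives (b).

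For (a), the key observation is that $\overline{\psi(x)}=\psi(-x)$, so
$$\overline{G(\chi)}=\sum_{x\in\Fq^*}\psi(-x)\overline{\chi}(x)=\overline{\chi}(-1)\sum_{x\in\Fq^*}\psi(x)\overline{\chi}(x)=\overline{\chi}(-1)G(\overline{\chi})$$
after substituting $x\mapsto -x$. Combining this with (b) yields $|G(\chi)|^2=G(\chi)\overline{G(\chi)}=\overline{\chi}(-1)\chi(-1)q=q$, since $\chi(-1)=\pm 1$ is real and $|\chi(-1)|=1$. Taking square roots gives $|G(\chi)|=\sqrt{q}$.

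None of the three steps presents a genuine obstacle; the only subtle point is remembering to treat $t=-1$ separately in (b) (where the inner sum jumps from $-1$ to $q-1$) and to invoke nontriviality of $\chi$ at the very end to kill $\sum_{t\in\Fq^*}\chi(t)$. Once (b) and the conjugation identity $\overline{G(\chi)}=\overline{\chi}(-1)G(\overline{\chi})$ are in hand, (a) is automatic and no appeal to deeper machinery (such as the Hasse--Davenport relation invoked later in the paper) is required.
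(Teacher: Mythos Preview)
Your argument is correct and is precisely the standard proof (the one given in Lidl--Niederreiter, Theorems~5.11 and 5.12). The paper itself does not supply a proof of this lemma at all---it simply cites it from \cite{Lidl}---so there is nothing further to compare.
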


\begin{lemma}\cite[Theorem 5.15]{Lidl}\label{item525}
	Let $q=p^n$ and let $\chi_2$ be the quadratic character of $\Fq^*$. Then
	$$G(\chi_2)=\begin{cases}
	(-1)^{n+1}q^{\frac{1}{2}},&\text{ if }p\equiv 1\pmod{4}\\
	(-1)^{n+1}i^nq^{\frac{1}{2}},&\text{ if }p\equiv 3\pmod{4},\\
	\end{cases}$$
	where $i$ denotes the imaginary unity.
\end{lemma}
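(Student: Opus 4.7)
The plan is to reduce the general statement to the prime-field case $n=1$ via the Davenport--Hasse lifting relation, and then invoke Gauss's classical evaluation of the quadratic Gauss sum over $\Fp$.

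First I would treat the base case $q=p$. Here the goal is to show
\[
G_{\Fp}(\chi_2^{(p)}) = \begin{cases} \sqrt{p}, & p\equiv 1 \pmod 4,\\ i\sqrt{p}, & p\equiv 3\pmod 4, \end{cases}
\]
where $\chi_2^{(p)}$ is the Legendre symbol character on $\Fp^*$. By Lemma~\ref{item501}\ref{item522} we already know $G(\chi_2^{(p)})^2 = \chi_2^{(p)}(-1)\, p$, which equals $p$ if $p\equiv 1\pmod 4$ and $-p$ if $p\equiv 3\pmod 4$. This pins down $G(\chi_2^{(p)})$ up to sign (respectively, up to a choice of $\pm i$). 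Determining the correct sign is the classical hard part, first settled by Gauss; one standard route is to express $G(\chi_2^{(p)})$ as $\sum_{x=0}^{p-1} e^{2\pi i x^2/p}$ and evaluate this finite theta sum by the Poisson summation formula or by Dirichlet's method. I would simply invoke this classical result as a black box, since it is the content that is actually being quoted from Lidl--Niederreiter.

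Next, for general $n$, I would use the Davenport--Hasse relation: for any nontrivial multiplicative character $\lambda$ of $\Fp^*$, its lift $\lambda' := \lambda \circ N_{\Fqn/\Fp}$ to $\Fqn^*$ (where $q=p$, or more generally to any finite extension) satisfies
\[
G_{\Fqn}(\lambda') = (-1)^{n-1}\, G_{\Fp}(\lambda)^n.
\]
The key observation is that the lift of the quadratic character of $\Fp^*$ via the norm coincides with the quadratic character $\chi_2$ of $\Fq^*$ where $q=p^n$: indeed $\chi_2^{(p)}\circ N$ has order dividing $2$ and is nontrivial (because $N$ is surjective), hence must equal the unique order-$2$ character $\chi_2$ of $\Fq^*$.

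Finally I would combine the two pieces. When $p\equiv 1\pmod 4$,
\[
G(\chi_2) = (-1)^{n-1}(\sqrt{p})^n = (-1)^{n+1} q^{1/2},
\]
matching the first case of the lemma. When $p\equiv 3\pmod 4$,
\[
G(\chi_2) = (-1)^{n-1}(i\sqrt{p})^n = (-1)^{n+1} i^n\, q^{1/2},
\]
matching the second case (using $(-1)^{n-1}=(-1)^{n+1}$). The only genuinely nontrivial ingredient here is the sign determination in Gauss's classical theorem for the prime-field quadratic sum; everything after that is a direct application of Davenport--Hasse plus bookkeeping.
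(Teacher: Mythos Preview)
Your argument is correct. However, the paper does not supply its own proof of this lemma at all: it is simply quoted as \cite[Theorem~5.15]{Lidl} and placed in the preliminaries section without further justification. So there is nothing in the paper to compare your proposal against.

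For what it is worth, the route you outline---evaluate the quadratic Gauss sum over $\Fp$ via Gauss's classical sign determination, then lift to $\Fq$ by the Davenport--Hasse relation $G_{\Fq}(\chi')=(-1)^{n-1}G_{\Fp}(\chi)^n$---is precisely the proof given in Lidl--Niederreiter for Theorem~5.15, so your reconstruction matches the cited source. The only step that is not self-contained is, as you note, the sign in the prime-field case, which genuinely requires an analytic input such as Poisson summation or Dirichlet's contour argument.
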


\begin{theorem}\cite[Theorem 1]{evans1981pure}\label{item520} 
	Let $q=p^n$. Given a divisor $d>2$ of $q-1$, the following are equivalent:
	\begin{itemize}
		\item $d$ is $(p,r)$-admissible;
		\item $G(\chi_d^j)$ is pure for all $j\in\Z$.
	\end{itemize}
\end{theorem}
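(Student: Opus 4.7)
My plan is to prove the two implications separately, using different machinery.

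For the forward direction (admissible $\Rightarrow$ pure), let $r$ be the admissibility exponent, so $p^r\equiv -1\pmod d$ with $r$ minimal. Since $d\mid q-1=p^n-1$, the order of $p$ modulo $d$ divides $n$; by minimality this order is $2r$, so $2r\mid n$. Write $n=2rt$ and $q_0=p^{2r}$. I first handle the base case $q=q_0$: the map $\phi\colon x\mapsto x^{p^r}$ is an involution of $\mathbb{F}_{q_0}$ preserving the absolute trace (hence $\psi\circ\phi=\psi$), while $p^r\equiv-1\pmod d$ forces $\chi_d^j\circ\phi=\chi_d^{-j}$. Substituting $y=\phi(x)$ in $G(\chi_d^j)=\sum_x\psi(x)\chi_d^j(x)$ yields $G(\chi_d^j)=G(\chi_d^{-j})$; combining with Lemma~\ref{item501}(b),
$$G_{\mathbb{F}_{q_0}}(\chi_d^j)^2=G(\chi_d^j)G(\chi_d^{-j})=\chi_d^j(-1)\,q_0\in\mathbb{R},$$
so $G_{\mathbb{F}_{q_0}}(\chi_d^j)$ is pure (the case $d\mid j$ is trivial, since $G(\chi_0)=-1$). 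For general $q$, the Davenport--Hasse relation $G_{\mathbb{F}_q}(\chi_d^j)=(-1)^{t-1}G_{\mathbb{F}_{q_0}}(\chi_d^j)^{t}$ transfers the conclusion: its square equals $(\chi_d^j(-1)q_0)^t\in\mathbb{R}$.

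For the backward direction (pure $\Rightarrow$ admissible), assume $G(\chi_d^j)$ is pure for every $j$. Since $|G(\chi)|^2=q$, purity of $G(\chi_d^j)$ is equivalent to $G(\chi_d^j)^{2m}=\pm q^{m}$ for some $m\ge 1$, i.e.\ $G(\chi_d^j)^{2}/q$ is a root of unity. The strategy is to analyse the Galois action of $\operatorname{Gal}(\mathbb{Q}(\zeta_{pd})/\mathbb{Q})$ on $G(\chi_d)$. Writing $\tau_a$ for the automorphism fixing $\zeta_p$ with $\tau_a(\zeta_d)=\zeta_d^a$, one computes $\tau_a(G(\chi_d))=G(\chi_d^a)$; and complex conjugation acts by $G(\chi_d)\mapsto\chi_d(-1)G(\chi_d^{-1})$. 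Using Lemma~\ref{item501}(b) to replace $G(\chi_d^{-1})$ by $\chi_d(-1)q/G(\chi_d)$, the requirement that \emph{every} Galois conjugate $G(\chi_d^a)$ have some power equal to $\pm q^{m}$ should force the cyclic orbit $\langle p\rangle\subseteq(\mathbb{Z}/d\mathbb{Z})^*$ to be closed under $a\mapsto -a$, i.e.\ $-1\in\langle p\rangle$, which is precisely the $(p,r)$-admissibility of $d$.

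The main obstacle will be the backward direction. The forward part reduces to a trace-symmetry computation plus Davenport--Hasse once the base field $q_0=p^{2r}$ is identified. The backward direction, however, requires passing from the analytic statement ``some integer power of $G(\chi_d^j)$ is real'' to the purely group-theoretic conclusion $-1\in\langle p\rangle$, which is delicate. The cleanest route I can see goes through Stickelberger's congruence, which determines the $\mathfrak{p}$-adic valuations of $G(\chi_d^j)$ at the primes $\mathfrak{p}\mid p$ of $\mathbb{Z}[\zeta_d]$; rationality of $G(\chi_d^j)^{2m}$ forces these valuations to be balanced across the Galois orbit of $\mathfrak{p}$, and this balance is possible only when the decomposition group $\langle p\rangle$ contains $-1$. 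An elementary approach examining the root-of-unity factor of $G(\chi_d)^{2}/q$ directly via Gauss sum identities should also work in principle, but I expect it to be computationally heavier.
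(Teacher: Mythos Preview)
The paper does not supply a proof of this statement: it is quoted verbatim as \cite[Theorem~1]{evans1981pure} and used as a black box. So there is no in-paper argument to compare your proposal against; you are effectively reproving Evans' theorem.

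Your forward implication is correct and essentially complete. The points that deserve a word of justification all check out: minimality of $r$ does force the multiplicative order of $p$ modulo $d$ to be exactly $2r$ (if the order $e$ were a proper divisor of $2r$ one gets $e=2\gcd(e,r)$ and then $e/2<r$ would already satisfy $p^{e/2}\equiv-1$, contradicting minimality), so $2r\mid n$; the substitution $x\mapsto x^{p^r}$ fixes the canonical additive character because the absolute trace is Frobenius-invariant; and every order-$d$ character of $\Fq^*$ is the norm-lift of one on $\mathbb{F}_{q_0}^*$ since $d\mid q_0-1$, so the Davenport--Hasse lifting relation applies as you wrote it.

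Your backward implication, however, is a plan rather than a proof. The heuristic ``purity of every $G(\chi_d^a)$ should force $-1\in\langle p\rangle\subset(\Z/d\Z)^\ast$'' is correct, and Stickelberger is indeed the standard tool (and the one Evans uses), but you have not actually carried out the argument: you need to write down the $\mathfrak{P}$-adic valuations $v_{\mathfrak{P}}(G(\chi_d))$ given by Stickelberger, observe that purity forces the ideal $(G(\chi_d))^{2N}$ to equal $(q)^N$ and hence to have the same valuation at every prime above $p$, and then show that equality of these digit-sum expressions across the $(\Z/d\Z)^\ast/\langle p\rangle$-orbit is impossible unless complex conjugation lies in the decomposition group $\langle p\rangle$. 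None of these steps is written; in particular the last one requires a short combinatorial argument with $p$-adic digit sums that you have not supplied. As it stands, the backward direction is an outline, not a proof.
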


\begin{definition}For $d$ a divisor of $q-1$ and $a,b\in\Fq^*$, we set
	$$\theta_d(a,b)=\begin{cases}
	1,&\text{ if }\chi_d(a)=\chi_d(b);\\
	0,&\text{ otherwise.}
	\end{cases}$$
\end{definition}

\begin{theorem}\cite[Theorem 2.11]{OLIVEIRA2021101927}\label{item506} Let $q=p^n$ and $\vec{a}\in\Fq^s$. If each $d_i$ is $(p,r)$-admissible, then the number of $\Fq$-rational points on the affine hypersurface given Equation\eqref{item500} is given by
	\begin{equation}\label{item507}
	q^{s-1}-\varepsilon^{s+1}\sqrt{q}^{s-2}\left(\sqrt{q}\prod_{i=1}^{s}(1-d_i)^{\nu_i(b)}-\sum_{j=1}^{\sqrt{q}-\varepsilon}\prod_{i=1}^{s}(1-d_i)^{\delta_{i,j}}\right)
	\end{equation}
	for $b\neq 0$, where $\delta_{i,j}=\theta_{d_i}(a_i,\alpha^j)$, $\nu_{i}(b)=\theta_{d_i}(a_i,b)$ and $\varepsilon=(-1)^{n/2r}$.
\end{theorem}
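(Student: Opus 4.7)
The plan is to first derive a uniform Jacobi-sum expansion of $\Xa(\Fq)-q^{s-1}$ from which \eqref{item502} becomes visible as a triangle-inequality estimate. Writing the indicator of the defining equation through orthogonality of additive characters, expanding each inner sum $\sum_{x_i\in\Fq}\psi(ta_ix_i^{d_i})$ via multiplicative characters of order dividing $d_i$, and then separating the cases in which $\lambda_{\vec{j}} := \chi_{d_1}^{j_1}\cdots\chi_{d_s}^{j_s}$ is trivial or nontrivial, Theorem~\ref{item523} combined with Lemma~\ref{item501} collapses both cases into the clean expression
\[
\Xa(\Fq)-q^{s-1} \;=\; \sum_{\vec{j}} \prod_{i=1}^{s}\chi_{d_i}^{j_i}(a_i^{-1}b)\, J(\chi_{d_1}^{j_1},\ldots,\chi_{d_s}^{j_s}),
\]
where $\vec{j}=(j_1,\ldots,j_s)$ ranges over tuples with $1\le j_i\le d_i-1$. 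Counting the $I(d_1,\ldots,d_s)$ tuples making $\lambda_{\vec{j}}$ trivial separately and applying Proposition~\ref{item519} to each summand reproduces exactly the right-hand side of \eqref{item502}. Consequently, \eqref{item502} is attained if and only if every term $T_{\vec{j}} := \prod_i \chi_{d_i}^{j_i}(a_i^{-1}b)\, J(\vec{\chi}_{\vec{j}})$ equals $u\,|J(\vec{\chi}_{\vec{j}})|$ for a single sign $u\in\{\pm 1\}$ (the sign must be real, since $\Xa(\Fq)\in\Z$).

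For sufficiency, assume the three conditions. The hypothesis $\chi_{d_i}(a_i)=\chi_{d_i}(b)$ reduces every prefactor $\chi_{d_i}^{j_i}(a_i^{-1}b)$ to $1$. By Theorem~\ref{item520}, the $(p,r)$-admissibility of each $d_i$ forces each $G(\chi_{d_i}^{j_i})$ to be pure; the Davenport--Hasse lifting formula from $\F_{p^{2r}}$ up to $\Fq$ (legitimate since $2r\mid n$) makes each such Gauss sum a real multiple of $\sqrt{q}$ whose common sign is tracked by $\varepsilon=(-1)^{n/(2r)}$, equal to $+1$ by hypothesis. Theorem~\ref{item523} then promotes this to: every Jacobi sum $J(\vec{\chi}_{\vec{j}})$ is real, equal to $\pm q^{(s-1)/2}$ when $\lambda_{\vec{j}}$ is nontrivial and to $\pm q^{(s-2)/2}$ when $\lambda_{\vec{j}}$ is trivial. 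Summing the resulting signed contributions reproduces precisely the extremal value of \eqref{item502}; the two cases of Theorem~\ref{item523} contribute with opposite overall signs (due to the $-q^{-1}$ factor in the trivial case), and tracing through the parities yields an overall sign of $(-1)^{s+1}$, so $\Xa$ is maximal for odd $s$ and minimal for even $s$.

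For necessity, the equality case of the triangle inequality imposes $T_{\vec{j}} = u\,|J(\vec{\chi}_{\vec{j}})|$ for one common $u\in\{\pm 1\}$ and every $\vec{j}$. Taking ratios $T_{\vec{j}'}/T_{\vec{j}}$ for pairs differing in a single coordinate and invoking Theorem~\ref{item523} expresses each quotient $G(\chi_{d_i}^{j_i+1})/G(\chi_{d_i}^{j_i})$ as an explicit root of unity, from which purity of each individual Gauss sum $G(\chi_{d_i}^{j_i})$ follows; Theorem~\ref{item520} then produces a minimal $r_i$ with $d_i\mid p^{r_i}+1$. The assumption $\gcd(d_1,\ldots,d_s)>2$ (for $s>3$) supplies a common divisor $d>2$ of all $d_i$, and feeding $\chi_d$ into the same purity analysis forces all $r_i$ to agree with the common $r$ attached to $d$. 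Comparing the Davenport--Hasse sign $\varepsilon=(-1)^{n/(2r)}$ against the sign $u$ demanded by argument alignment forces $n/(2r)$ to be even, and finally, with the Jacobi sums now real, the identity $T_{\vec{j}} = u\,|J(\vec{\chi}_{\vec{j}})|$ forces $\chi_{d_i}^{j_i}(a_i^{-1}b)=1$ for every admissible $j_i$, i.e., $\chi_{d_i}(a_i)=\chi_{d_i}(b)$. The main obstacle will be precisely this extraction of a single common $r$ from what are a priori distinct $r_i$'s; this is where the hypothesis $\gcd(d_1,\ldots,d_s)>2$ becomes indispensable, and where the excluded case $s=3$ (in which $I(d_1,d_2,d_3)$ can be too small to constrain the individual $G(\chi_{d_i})$'s by the ratio argument) would break the argument.
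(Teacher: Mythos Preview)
Your proposal is not a proof of the stated theorem at all. The statement you were asked to prove is Theorem~\ref{item506}: under the sole hypothesis that each $d_i$ is $(p,r)$-admissible, derive the explicit closed formula~\eqref{item507} for $\Xa(\Fq)$. This is a point-counting identity, with no reference to maximality, minimality, or the bound~\eqref{item502}, and with no hypotheses on $s\neq 3$ or $\gcd(d_1,\dots,d_s)>2$. What you have written is instead a sketch of Theorem~\ref{item504} (the main result of the paper): a characterization of when the Weil bound is attained, complete with ``sufficiency'' and ``necessity'' directions for the three conditions listed there. Essentially every sentence in your proposal---the triangle-inequality equality case, the three bullet conditions, the excluded case $s=3$, the role of $\gcd>2$, the conclusion about parity of $s$---belongs to Theorem~\ref{item504}, not to Theorem~\ref{item506}.

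Moreover, the paper does not prove Theorem~\ref{item506}; it is quoted verbatim from \cite[Theorem~2.11]{OLIVEIRA2021101927} and used as a black box in the proof of Theorem~\ref{item504}. So there is no ``paper's own proof'' of this statement to compare against. If your intent was to supply an independent proof of the counting formula~\eqref{item507}, you would need to start from the Jacobi-sum expansion~\eqref{item521}, invoke the purity of the Gauss sums $G(\chi_{d_i}^{j_i})$ guaranteed by Theorem~\ref{item520} under $(p,r)$-admissibility, compute their exact values (sign included) via Davenport--Hasse lifting from $\F_{p^{2r}}$, and then reorganize the resulting sum over $\vec{j}$ into the shape~\eqref{item507}, where the parameters $\nu_i(b)$ and $\delta_{i,j}$ record which coset of the $d_i$-th powers each $a_i$ lies in. None of this bookkeeping appears in your write-up.
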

Along the proof of our main result we need the following lemma.
\begin{lemma}\label{item539}
	Let $d_1,\dots,d_s$ be positive integers and let $D=\lcm(d_1,\dots,d_s)$. Then
	\begin{equation}\label{item252}
		I(d_1,\dots,d_s)=\tfrac{(-1)^s}{D}\sum_{m=1}^{D}\prod_{d_i|m} (1-d_i).
	\end{equation}
\end{lemma}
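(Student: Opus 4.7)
The plan is to use orthogonality of the additive characters of $\Z/D\Z$ to detect the condition $\sum_i y_i/d_i \in \Z$, and then factor the resulting $s$-fold sum into a product of one-dimensional sums. Setting $\zeta_d=e^{2\pi i/d}$, the standard orthogonality identity
$$\um\!\left[\sum_{i=1}^{s}\frac{y_i}{d_i}\in\Z\right]=\frac{1}{D}\sum_{m=0}^{D-1}\prod_{i=1}^{s}\zeta_{d_i}^{m y_i},$$
plugged into the definition of $I(d_1,\dots,d_s)$ and combined with a change in the order of summation, would give
$$I(d_1,\dots,d_s)=\frac{1}{D}\sum_{m=0}^{D-1}\prod_{i=1}^{s}\sum_{y_i=1}^{d_i-1}\zeta_{d_i}^{m y_i}.$$

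Next I would evaluate each inner sum separately. If $d_i\mid m$, every summand equals $1$, so the sum equals $d_i-1$; if $d_i\nmid m$, then $\sum_{y=0}^{d_i-1}\zeta_{d_i}^{my}=0$, so $\sum_{y=1}^{d_i-1}\zeta_{d_i}^{my}=-1$. Letting $k_m=|\{i:d_i\mid m\}|$, the product over $i$ therefore becomes $(-1)^{s-k_m}\prod_{d_i\mid m}(d_i-1)$, and absorbing $k_m$ minus signs via $d_i-1=-(1-d_i)$ rewrites this uniformly as $(-1)^s\prod_{d_i\mid m}(1-d_i)$. This is the one step that requires a little care.

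Finally, both $m=0$ and $m=D$ satisfy $d_i\mid m$ for every $i$ (the former trivially, the latter because $D=\lcm(d_1,\dots,d_s)$), so these two indices contribute the same quantity $\prod_{i=1}^{s}(1-d_i)$; shifting the range of summation from $\{0,\dots,D-1\}$ to $\{1,\dots,D\}$ consequently yields precisely~\eqref{item252}. The argument is essentially a routine application of orthogonality; the principal (and rather mild) obstacle is the sign bookkeeping in the middle step.
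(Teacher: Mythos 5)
Your proof is correct, but it takes a genuinely different route from the paper. The paper does not argue from scratch: it quotes the known inclusion--exclusion-type formula (p.~293 of \cite{Lidl})
\begin{equation*}
I(d_1,\dots,d_s)=(-1)^s+(-1)^{s}\sum_{r=1}^s (-1)^{r}\sum_{1\le i_1<\dots<i_r\le s}\frac{d_{i_1}\cdots d_{i_r}}{\lcm(d_{i_1},\dots,d_{i_r})},
\end{equation*}
and then verifies that expanding each product $\prod_{d_i\mid m}(1-d_i)$ in \eqref{item252} over $m=1,\dots,D$ produces the monomial $d_{i_1}\cdots d_{i_r}$ exactly $D/\lcm(d_{i_1},\dots,d_{i_r})$ times, so the two expressions coincide. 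You instead give a self-contained derivation by orthogonality of $D$-th roots of unity: detecting $\sum_i y_i/d_i\in\Z$ with $\tfrac1D\sum_{m=0}^{D-1}$, factoring the $s$-fold sum, evaluating the inner geometric sums as $d_i-1$ or $-1$ according to whether $d_i\mid m$, and handling the sign and the range shift correctly (your observation that the $m=0$ and $m=D$ terms contribute the same product, since $d_i\mid D$ for all $i$, is exactly what justifies replacing $\{0,\dots,D-1\}$ by $\{1,\dots,D\}$). Your argument buys independence from the cited formula---indeed it essentially reproves it---at the cost of a slightly longer computation, while the paper's proof is a quick term-counting check once the reference is accepted; both are valid.
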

\begin{proof}
	A well-known formula for $I(d_1,\dots,d_s)$ (see p.293 of \cite{Lidl}) is the following:
	\begin{equation}\label{item253}
		I(d_1,\dots,d_s)=(-1)^s+(-1)^{s}\sum_{r=1}^s (-1)^{r}\sum_{1\le i_1<\dots<i_r\le s}\frac{d_{i_1}\dots d_{i_r}}{\lcm(d_{i_1},\dots,d_{i_r})}.
	\end{equation}
	Let $i_1,\dots,i_r$ be integers such that $1\le i_1<\dots<i_r\le s$. The product $d_{i_1}\cdots d_{i_r}$ appears in an expansion of a product in \eqref{item252} whenever $d_{i_j}|m$ for all $j=1,\dots,r$, which occurs $\tfrac{D}{\lcm(d_{i_1},\dots,d_{i_r})}$ times, since $1\le m\le D$. Therefore the expressions in \eqref{item252} and \eqref{item253} coincide, proving our result.
\end{proof}

\section{The proof of the main result}\label{item508}
In this section we provide the proof of Theorem~\ref{item504}. In order to do that, the following definitions will be useful.

\begin{definition}\label{item511}
	We define
	$$\Omega=\{z\in\C:\text{ there exists an integer }n\text{ such that }z^n\in\R\}.$$
\end{definition}

Let $\chi,\lambda_1,\dots,\lambda_s$ be a nontrivial multiplicative characters and $b\in\Fq$. We note that $G(\chi)$ is a pure Gauss sum if and only if $G(\chi)\in\Omega$. Also, $J_b(\lambda_1,\dots,\lambda_s)$ is a pure Jacobi sum if and only if $J_b(\lambda_1,\dots,\lambda_s)\in\Omega$. The proof of the following lemma is direct and it will be omitted.

\begin{lemma}\label{item528}
	$(\Omega,\times)$ is a group.
\end{lemma}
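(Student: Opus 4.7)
The plan is to verify directly the three group axioms for $(\Omega,\times)$, treating $\Omega$ as a subset of $(\C^*,\times)$ so that inverses make sense; note that for any $z\in\C^*$ the condition ``$z^n\in\R$'' may as well be required for a positive integer $n$, since $z^n\in\R$ if and only if $z^{-n}\in\R$. Associativity is inherited from $(\C,\times)$, and $1\in\Omega$ is a two-sided identity (take $n=1$), so only closure under multiplication and the existence of inverses require an argument.

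For closure, given $z_1,z_2\in\Omega$ with $z_1^{n_1}\in\R$ and $z_2^{n_2}\in\R$ for some positive integers $n_1,n_2$, I would compute
\[
(z_1z_2)^{n_1n_2}=\bigl(z_1^{n_1}\bigr)^{n_2}\bigl(z_2^{n_2}\bigr)^{n_1},
\]
which is a product of two real numbers, hence real; therefore $z_1z_2\in\Omega$. For inverses, if $z\in\Omega$ with $z^n\in\R$, then $z^n\neq 0$ because $z\neq 0$, so $(z^{-1})^n=(z^n)^{-1}\in\R$, and thus $z^{-1}\in\Omega$.

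Since every step reduces immediately to the fact that products and reciprocals of nonzero real numbers are real, there is no substantive obstacle. The only subtlety worth flagging is the convention on $n$ and the need to work inside $\C^*$; this is presumably why the author calls the proof ``direct'' and omits it.
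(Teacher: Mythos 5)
Your proof is correct, and since the paper omits the argument as ``direct,'' your straightforward verification of closure $\bigl((z_1z_2)^{n_1n_2}=(z_1^{n_1})^{n_2}(z_2^{n_2})^{n_1}\bigr)$, identity, and inverses is exactly the intended argument. Your caveat about working in $\C^*$ (and requiring the exponent to be nonzero) is the right reading of Definition~\ref{item511}: as literally stated $\Omega$ contains $0$, but every element the paper places in $\Omega$ is a nonzero Gauss or Jacobi sum, so the restriction is harmless.
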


\begin{definition} Let $\vec{d}=(d_1,\dots,d_s)\in\Z$ with $d_i\geq 2$ for all $i=1,\dots,s$. We define 
\begin{enumerate}
	\item $\mathcal{B}(\vec{d})=\{(\ell_1,\dots,\ell_s)\in\Z^s:0<\ell_i<d_i\text{ for all }i=1,\dots,s\}$;
	\item $\mathcal{U}(\vec{d})=\{(\ell_1,\dots,\ell_s)\in\mathcal{B}(\vec{d}): \tfrac{y_1}{d_1}+\dots+\tfrac{y_s}{d_s}\not\equiv 0\pmod{1}\};$
	\item $\mathcal{U}^c(\vec{d})=\{(\ell_1,\dots,\ell_s)\in\mathcal{B}(\vec{d}): \tfrac{y_1}{d_1}+\dots+\tfrac{y_s}{d_s}\equiv 0\pmod{1}\}.$
\end{enumerate}
\end{definition}

Let $b\in\Fq^*$. A well known way to compute $\Xa(\Fq)$ is the following:

\begin{equation}\label{item521}
\begin{aligned}\Xa(\Fq)&=\sum_{b_1+\dots+b_s=b}\, \prod_{i=1}^s\left[1+\chi_{d_i}(a_i^{-1}b_i)+\dots+\chi^{d_i-1}_{d_i}(a_i^{-1}b_i)\right]\\
&=q^{s-1}+\sum_{(\ell_1,\dots,\ell_s)\in\mathcal{B}(\vec{d})} \chi_{d_1}^{\ell_1}\big(\tfrac{b}{a_1}\big)\dots \chi_{d_s}^{\ell_s}\big(\tfrac{b}{a_s}\big)J\big(\chi_{d_1}^{\ell_1},\dots,\chi_{d_s}^{\ell_s}\big).\\
\end{aligned}
\end{equation}

Proposition~\ref{item519} states that
$$\big|J\big(\chi_{d_1}^{\ell_1},\dots,\chi_{d_s}^{\ell_s}\big)\big|=\begin{cases}
q^{\frac{s-1}{2}},&\text{ if } (\ell_1,\dots,\ell_s)\in\mathcal{U}(\vec{d});\\
q^{\frac{s-2}{2}},&\text{ if } (\ell_1,\dots,\ell_s)\in\mathcal{U}^c(\vec{d}).\\
\end{cases}$$
From here, we have the following direct result.
\begin{lemma}\label{item536}
	 If $\Xa(\Fq)$ attains the bound \eqref{item502}, then $J\big(\chi_{d_1}^{\ell_1},\dots,\chi_{d_s}^{\ell_s}\big)$ is pure for all $(\ell_1,\dots,\ell_s)\in\mathcal{B}(\vec{d})$.
\end{lemma}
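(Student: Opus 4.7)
The plan is to realize the Weil bound \eqref{item502} as \emph{exactly} the triangle inequality applied to \eqref{item521}, and then to extract purity from equality of arguments using the symmetry $\ell\mapsto\ell^c:=(d_1-\ell_1,\dots,d_s-\ell_s)$.

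First I would rewrite \eqref{item521} as $\Xa(\Fq)-q^{s-1}=\sum_{\ell\in\mathcal{B}(\vec d)} z_\ell$, where $z_\ell=c_\ell J_\ell$ with $c_\ell=\prod_{i=1}^s\chi_{d_i}^{\ell_i}(b/a_i)$ a root of unity and $J_\ell=J(\chi_{d_1}^{\ell_1},\dots,\chi_{d_s}^{\ell_s})$. Because $0<\ell_i<d_i$, each $\chi_{d_i}^{\ell_i}$ is nontrivial, so Proposition~\ref{item519} gives $|z_\ell|=q^{(s-1)/2}$ on $\mathcal{U}(\vec d)$ and $|z_\ell|=q^{(s-2)/2}$ on $\mathcal{U}^c(\vec d)$; in particular every $z_\ell$ is nonzero. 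Using $|\mathcal{B}(\vec d)|=\prod_i(d_i-1)$ and $|\mathcal{U}^c(\vec d)|=I(d_1,\dots,d_s)$, a one-line algebraic simplification shows that the triangle inequality $\bigl|\sum_\ell z_\ell\bigr|\le\sum_\ell|z_\ell|$ reproduces exactly the right-hand side of \eqref{item502}. Hence, if $\Xa(\Fq)$ attains the bound, equality must hold in this triangle inequality, which in turn forces some unimodular $u\in\C$ with $z_\ell=|z_\ell|\,u$ for every $\ell\in\mathcal{B}(\vec d)$.

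Next I would pair each $\ell$ with its ``complement'' $\ell^c\in\mathcal{B}(\vec d)$. The identities $\chi_{d_i}^{d_i-\ell_i}=\overline{\chi_{d_i}^{\ell_i}}$ and $\overline{J(\lambda_1,\dots,\lambda_s)}=J(\overline{\lambda_1},\dots,\overline{\lambda_s})$ give $J_{\ell^c}=\overline{J_\ell}$ and $c_{\ell^c}=\overline{c_\ell}$, so $z_{\ell^c}=\overline{z_\ell}$. Since also $|z_{\ell^c}|=|z_\ell|$, comparing the two equalities $z_{\ell^c}=|z_\ell|\,u$ and $z_{\ell^c}=\overline{z_\ell}=|z_\ell|\,\bar u$ forces $u=\bar u$, so $u=\pm 1$. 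Consequently $z_\ell=c_\ell J_\ell\in\R$ for every $\ell\in\mathcal{B}(\vec d)$.

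Finally, since $c_\ell$ is a root of unity, pick $N\ge 1$ with $c_\ell^N=1$; then $J_\ell^N=(c_\ell J_\ell)^N\in\R$, which is precisely the definition of $J_\ell$ being pure. The only step that requires genuine care is identifying the triangle-inequality bound with the expression in \eqref{item502} via the counting of $\mathcal U(\vec d)$ and $\mathcal U^c(\vec d)$; once that identification is in place, the conjugation symmetry $\ell\mapsto\ell^c$ delivers the conclusion essentially for free.
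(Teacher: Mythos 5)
Your proof is correct and fills in exactly the argument the paper leaves implicit (the lemma is presented as a ``direct result'' of \eqref{item521} and Proposition~\ref{item519}): attaining \eqref{item502} amounts to equality in the triangle inequality applied to \eqref{item521}, which forces a common unimodular factor $u$ among the terms $z_\ell=c_\ell J_\ell$, and your conjugation symmetry then makes each $z_\ell$ real, hence each Jacobi sum pure since $c_\ell$ is a root of unity. A marginally quicker finish to the middle step: $\Xa(\Fq)-q^{s-1}$ is real and equals $u\sum_\ell|z_\ell|$ with $\sum_\ell|z_\ell|>0$, so $u=\pm1$ follows at once, without the complement pairing.
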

 In what follows, we use this fact to obtain necessary conditions for which bound \eqref{item502} is attained.
The following lemma is a consequence of the well-known Davemport-Hasse Relation and it will be used in the main results of this section.
\begin{lemma}\label{item529} Let $k$ be a positive integer and let $\lambda$ and $\chi$ be multiplicative characters of $\Fq^*$ such that $\lambda$ has order $m\geq 1$. Then
	$$\frac{\prod_{j=0}^{m-1}G\big(\lambda^{kj} \chi\big)}{G\big(\chi^{m/d}\big)^{d}}\in \Omega,$$
	where $d=\gcd(m,k)$.
\end{lemma}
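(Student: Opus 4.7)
The plan is to apply the Davenport--Hasse product formula after first collapsing the $m$-fold product into an $m'$-fold product (with $m'=m/d$) raised to the $d$-th power, and then use the norm identity $G(\chi)G(\overline{\chi})=\chi(-1)q$ to verify purity of what remains.

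First I would set $\mu=\lambda^{k}$, which has order $m'=m/d$. As $j$ ranges over $\{0,1,\dots,m-1\}$, the exponent $kj$ modulo $m$ takes values in the subgroup $d\Z/m\Z$, hitting each of its $m'$ elements exactly $d$ times; equivalently, $\lambda^{kj}=\mu^{j\bmod m'}$ cycles through all powers of $\mu$ with multiplicity $d$. Hence
$$\prod_{j=0}^{m-1}G(\lambda^{kj}\chi)=\Bigl(\prod_{j=0}^{m'-1}G(\mu^{j}\chi)\Bigr)^{d}.$$

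Next I would apply the Davenport--Hasse product formula to the character $\mu$ of order $m'$ and the character $\chi$ (assuming $\chi^{m'}$ is nontrivial), which gives
$$\prod_{j=0}^{m'-1}G(\mu^{j}\chi)=\chi(m')^{-m'}\,G(\chi^{m'})\prod_{j=1}^{m'-1}G(\mu^{j}).$$
Raising to the $d$-th power and dividing by $G(\chi^{m/d})^{d}=G(\chi^{m'})^{d}$, the statement reduces to
$$\frac{\prod_{j=0}^{m-1}G(\lambda^{kj}\chi)}{G(\chi^{m/d})^{d}}=\chi(m')^{-m'd}\,\Bigl(\prod_{j=1}^{m'-1}G(\mu^{j})\Bigr)^{d}.$$
The prefactor is a root of unity and hence lies in $\Omega$, so by Lemma~\ref{item528} it suffices to show that $P:=\prod_{j=1}^{m'-1}G(\mu^{j})\in\Omega$.

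For this final step I would use Lemma~\ref{item501}(b) to write $G(\mu^{j})G(\mu^{-j})=\mu^{j}(-1)\,q$, and pair up the factors of $P^{2}$ using the reindexing $j\mapsto m'-j$, which identifies $\{\mu^{-j}\}_{j=1}^{m'-1}$ with $\{\mu^{j}\}_{j=1}^{m'-1}$:
$$P^{2}=\prod_{j=1}^{m'-1}G(\mu^{j})G(\mu^{-j})=q^{m'-1}\prod_{j=1}^{m'-1}\mu^{j}(-1)=\pm\,q^{m'-1}\in\R,$$
since each $\mu^{j}(-1)\in\{\pm 1\}$. Hence $P\in\Omega$ and therefore $P^{d}\in\Omega$, completing the argument.

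The main obstacle I anticipate is the bookkeeping around degenerate characters, namely when $\chi^{m'}=\chi_{0}$ (in particular when $\chi$ itself is trivial) so that some factor $G(\mu^{j}\chi)$ on the left is $-1$. These cases must be treated separately by direct substitution using $G(\chi_{0})=-1$ from Lemma~\ref{item501}(c); the trivial factors appearing on the left pair off precisely with the corresponding trivial factor(s) of $G(\chi^{m/d})^{d}$ in the denominator, and the same conclusion is reached.
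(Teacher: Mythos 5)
Your proposal is correct and follows essentially the same route as the paper: both hinge on the Davenport--Hasse product relation applied to the order-$m/d$ character $\lambda^k$ (after noting the full product is the $d$-th power of one cycle), with the degenerate cases where $\chi^{m/d}$ is trivial handled separately via $G(\chi_0)=-1$. The only cosmetic difference is that you obtain purity of $\prod_{j=1}^{m/d-1}G(\lambda^{kj})$ by squaring and pairing through $G(\chi)G(\overline{\chi})=\chi(-1)q$, whereas the paper keeps the quadratic Gauss sum as a separate factor and invokes Lemma~\ref{item525}.
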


\begin{proof}
	We observe $\lambda^k$ has order $\tfrac{m}{d}$. If $\chi$ also has order $\tfrac{m}{d}$, then item \ref{item522} of Lemma~\ref{item501} and Theorem 5.11 of \cite{Lidl} entail that
	\begin{equation}\label{item526}
	\frac{\prod_{j=0}^{m-1}G\big(\lambda^{kj} \chi\big)}{G\big(\chi^{m/d}\big)^{d}}=\frac{\prod_{j=0}^{m-1}G\big(\lambda^{kj}\big)}{(-1)^{d}}=\begin{cases}
	q^{\frac{m-1}{2}}\prod_{j=1}^{\frac{m-1}{2}}\lambda^j(-1),&\text{ if }m\text{ is odd;}\\
	G(\lambda^{m/2})q^{\frac{m-2}{2}}\prod_{j=1}^{\frac{m-2}{2}}\lambda^j(-1),&\text{ if }m\text{ is even.}\\
	\end{cases}
	\end{equation}
	The assertion follows by Equation~\eqref{item526} and Lemma~\ref{item525}. If $d=m$, then
	$$\prod_{j=0}^{m-1}G\big(\lambda^{kj} \chi\big)=\prod_{j=0}^{m-1}G\big( \chi\big)=G\big( \chi\big)^d,$$
	which proves the assertion. Now, we assume that the order of $\chi$ is not $\tfrac{m}{d}$ and $d\neq m$. It follows by Lemma~\ref{item525} and Corollary $5.29$ of \cite{Lidl} that
	$$\frac{\prod_{j=0}^{m/d-1}G\big(\lambda^{kj} \chi\big)}{G\big(\chi^{m/d}\big)}\in \Omega,$$
	and therefore our result follows by using that $\lambda^k$ has order $\tfrac{m}{d}$.
\end{proof}

The following technical results will be useful.

\begin{lemma}\cite[Theorem]{sun1987solvability}\label{item240}
	Let $s>2$ be an integer and let $d_1,\dots,d_s$ be positive integers. Then $I(d_1,\dots,d_s)=0$ if and only if one of the following holds:
	\begin{itemize}
		\item for some $d_i$, $\gcd(d_i,d_1\dots d_s/d_i)=1$, or
		\item if $d_{i_1},\dots,d_{i_k}$ $(1\le i_1<\dots i_k\le s)$ is the set of all even integers among $\{d_1,\dots,d_s\}$, then $2\nmid k$, $d_{i_1}/2,\dots,d_{i_k}/2$ are pairwise prime, and $d_{i_j}$is prime to any odd number in $\{d_1,\dots,d_s\}\, (j=1,\dots,k)$.
	\end{itemize}
	
\end{lemma}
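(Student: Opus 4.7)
The plan is to work with the combinatorial interpretation of $I(d_1,\dots,d_s)$ as the number of tuples $(y_1,\dots,y_s)\in\Z^s$ with $1\le y_i\le d_i-1$ and $\sum_i y_i/d_i\in\Z$, rather than with the closed form of Lemma~\ref{item539}. Since $I\ge 0$, the equality $I=0$ is equivalent to the non-existence of such a tuple, and I will prove each direction by analyzing the solvability of this congruence.

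For the ``if'' direction I treat the two hypotheses separately. If $\gcd(d_i,\prod_{j\ne i}d_j)=1$ for some index $i$, then multiplying $\sum_j y_j/d_j\in\Z$ by $D=\lcm(d_1,\dots,d_s)$ and reducing modulo $d_i$ eliminates every $y_j$-term with $j\ne i$ (because $d_i$ divides $D/d_j$), leaving $y_i(D/d_i)\equiv 0\pmod{d_i}$ with $D/d_i$ a unit modulo $d_i$; this forces $d_i\mid y_i$, contradicting $1\le y_i<d_i$. Under the structural hypothesis, I relabel so that the even indices are $1,\dots,k$ and set $m_i=d_i/2$; a Chinese Remainder Theorem argument using the pairwise coprimality of the $m_i$ and the coprimality of each $m_i$ to every odd $d_j$ shows that the congruence reduced modulo $m_l$ forces $m_l\mid y_l$, hence $y_l=m_l$, for every $l\le k$. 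Substituting turns the full congruence into $k/2+\sum_{j>k}y_j/d_j\in\Z$; since the right-hand sum has odd denominator, $k$ odd makes this impossible.

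The main obstacle is the ``only if'' direction, where one must exhibit at least one solution tuple assuming neither hypothesis holds. My plan is to proceed prime-by-prime: for each prime $p\mid D$ the $p$-local congruence $\sum_{i:\,p\mid d_i} z_i/p^{v_p(d_i)}\in\Z$ with $0\le z_i<p^{v_p(d_i)}$ admits exactly $p^{(\sum_i v_p(d_i))-\max_i v_p(d_i)}$ solutions, and the Chinese Remainder Theorem patches any choice of such local solutions to a global tuple $(y_i\bmod d_i)$ solving the full congruence. The delicate task is to make these local choices so that for every $i$ at least one $p$-part of $y_i$ is nonzero, i.e., $y_i\not\equiv 0\pmod{d_i}$. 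Following \cite{sun1987solvability}, the hard part is the $2$-local analysis: the combinatorial obstructions that prevent such a choice match precisely the clauses ``$k$ odd, $m_i$ pairwise coprime, each $m_i$ coprime to every odd $d_l$'', which explains the curious shape of the second condition. Once the $2$-local patching is handled, an induction on $s$ that removes an index whose local solutions collapse completes the construction.
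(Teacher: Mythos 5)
This lemma is not proved in the paper at all: it is quoted verbatim as the Theorem of \cite{sun1987solvability}, so there is no internal argument to compare against, and a proposal for it must stand on its own as a complete proof. Your treatment of the ``conditions imply $I(d_1,\dots,d_s)=0$'' direction is essentially sound: the reduction of $\sum_j y_jD/d_j\equiv 0\pmod{d_i}$ when $\gcd(d_i,\prod_{j\neq i}d_j)=1$, and, under the structural hypothesis, the prime-by-prime argument forcing $y_l=d_l/2$ for every even $d_l$ (note you must also treat the $2$-part when one $d_l$ is divisible by $4$, which your phrase ``reduced modulo $m_l$'' glosses over but which does go through since all other even $d_j$ then have $2$-adic valuation $1$), followed by the parity obstruction $k/2+\sum_{j>k}y_j/d_j\notin\Z$ for $k$ odd.

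The genuine gap is the converse, which is the entire substance of Sun's theorem: assuming neither condition holds, one must actually exhibit a tuple $(y_1,\dots,y_s)$ with $1\le y_i\le d_i-1$ and $\sum_i y_i/d_i\in\Z$. Your set-up (partial-fraction/CRT decomposition into $p$-local congruences, and the count $p^{\sum_i v_p(d_i)-\max_i v_p(d_i)}$ of local solutions) is correct, but it only reformulates the problem: the difficulty is to select, for every prime $p\mid\lcm(d_1,\dots,d_s)$, a local solution in such a way that for each index $i$ at least one $p$-component of $y_i$ is nonzero, i.e.\ $y_i\not\equiv 0\pmod{d_i}$, and to show that this selection is obstructed exactly when one of the two stated conditions holds. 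At precisely this point you write that, ``following \cite{sun1987solvability}, the hard part is the $2$-local analysis'' and that an unspecified induction on $s$ ``completes the construction''; no such selection argument, case analysis at $p=2$, or induction step is actually given. As it stands the proposal proves only one implication and defers the other to the very reference being reconstructed, so it is not an acceptable proof of the lemma.
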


The following proposition is the key step of the proof of our main results. 

\begin{proposition}\label{item534}
	Let $s\neq 3$ be an integer and suppose $\gcd(d_1,\dots,d_s)>2$ if $s>3$. If $J\big(\chi_{d_1}^{\ell_1},\dots,\chi_{d_s}^{\ell_s}\big)$ is pure for all $(\ell_1,\dots,\ell_s)\in\mathcal{B}(\vec{d})$, then $G\big(\chi_{d_j}^\ell\big)$ is pure for all $\ell\in\Z$ and $j=1,\dots,s$.
\end{proposition}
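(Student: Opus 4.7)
The plan is to combine Theorem~\ref{item523} with Lemma~\ref{item529} (the Davenport--Hasse type identity) to derive a power relation $G(\chi_{d_j}^\ell)^N \in \Omega$ for some positive integer $N$; this forces $G(\chi_{d_j}^\ell) \in \Omega$ because $\Omega$ is closed under root extraction (if $z^N \in \Omega$ then some power of $z^N$ is real, hence some power of $z$ is real, so $z \in \Omega$). The input from the hypothesis, via Theorem~\ref{item523}, is: for every $\vec\ell \in \mathcal B(\vec d)$,
\[
\prod_{i=1}^s G(\chi_{d_i}^{\ell_i}) \equiv G\!\left(\textstyle\prod_{i=1}^s \chi_{d_i}^{\ell_i}\right) \pmod{\Omega} \quad \text{if } \vec\ell \in \mathcal{U}(\vec d),
\]
while $\prod_{i=1}^s G(\chi_{d_i}^{\ell_i}) \in \Omega$ if $\vec\ell \in \mathcal{U}^c(\vec d)$.

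Fix $j$ (WLOG $j=1$) and $\ell \in \{1,\ldots,d_1-1\}$. For chosen auxiliary exponents $\ell_3,\ldots,\ell_s \in \{1,\ldots,d_i-1\}$, set $\phi = \chi_{d_1}^\ell \prod_{i \geq 3}\chi_{d_i}^{\ell_i}$ and $C = G(\chi_{d_1}^\ell) \prod_{i \geq 3} G(\chi_{d_i}^{\ell_i})$. The multiplicativity relation, applied with $\ell_2$ varying over $\{1,\ldots,d_2-1\}$, yields $G(\chi_{d_2}^{\ell_2}\phi) \equiv C\cdot G(\chi_{d_2}^{\ell_2}) \pmod{\Omega}$ whenever $\chi_{d_2}^{\ell_2}\phi$ is nontrivial (the trivial-product case contributes only an $\Omega$-valued correction, via $G(\chi_0)=-1$ and the case $\mathcal{U}^c$ identity). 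Multiplying these over $\ell_2 = 1,\ldots,d_2-1$, invoking Lemma~\ref{item529} with $\lambda=\chi_{d_2}$, $k=1$, $\chi=\phi$, and using the unconditional fact $\prod_{\ell_2=1}^{d_2-1} G(\chi_{d_2}^{\ell_2}) \in \Omega$ (same lemma with $\chi=\chi_0$), one arrives at the key identity
\[
C^{d_2-1}\, G(\phi) \equiv G(\phi^{d_2}) \pmod{\Omega}.
\]
When $s=2$ this reads $G(\chi_{d_1}^\ell)^{d_2} \equiv G(\chi_{d_1}^{\ell d_2}) \pmod{\Omega}$; iterating, and using that the sequence $\ell d_2^k \bmod d_1$ is eventually periodic (it either reaches $0$, giving $G(\chi_0)=-1 \in \Omega$, or cycles back to a previous value), one obtains $G(\chi_{d_1}^\ell)^N \in \Omega$ for some $N \geq 1$, and concludes.

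For $s \geq 4$ with $g := \gcd(d_1,\ldots,d_s) > 2$, Lemma~\ref{item240} ensures $\mathcal{U}^c(\vec d) \neq \emptyset$, so the trivial-product identities $\prod_i G(\chi_{d_i}^{\ell_i}) \in \Omega$ are available. Moreover, the divisibility $g \leq \gcd(d_i,d_2)$ permits choosing $\ell_i = d_i/\gcd(d_i,d_2) \in \{1,\ldots,d_i-1\}$ for $i \geq 3$, so that $\chi_{d_i}^{\ell_i d_2} = \chi_0$ and $\phi^{d_2} = \chi_{d_1}^{\ell d_2}$. Combining the key identity (now $C^{d_2-1}G(\phi) \equiv G(\chi_{d_1}^{\ell d_2}) \pmod\Omega$) with a judiciously chosen trivial-product identity that expresses $G(\phi)$ as a product of individual Gauss sums, together with $G(\chi)G(\bar\chi)=\chi(-1)q\in\Omega$ from Lemma~\ref{item501} and Lemma~\ref{item525} for quadratic intermediates, one isolates $G(\chi_{d_1}^\ell)^N \in \Omega$ for some $N \geq 1$. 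The principal obstacle throughout is the combinatorial bookkeeping of degenerate configurations where an intermediate product character is trivial; the exclusion of $s=3$ and the condition $\gcd > 2$ for $s > 3$ provide exactly the flexibility in the choice of auxiliary exponents needed to circumvent these degeneracies, since they guarantee both $\mathcal{U}^c(\vec d) \neq \emptyset$ and the existence of enough nontrivial ``multiples'' $d_i/\gcd(d_i,d_2)$ in $\{1,\ldots,d_i-1\}$.
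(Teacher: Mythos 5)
Your $s=2$ argument is sound and coincides with the paper's: purity of $J(\chi_{d_1}^{\ell},\chi_{d_2}^{\ell_2})$ together with Lemma~\ref{item529} gives $G(\chi_{d_1}^{\ell})^{d_2}/G(\chi_{d_1}^{\ell d_2})\in\Omega$, and iterating $\ell\mapsto \ell d_2 \bmod d_1$ (the orbit either hits $0$ or cycles) finishes, using that $z^N\in\Omega$ forces $z\in\Omega$. The genuine gap is the case $s\ge 4$. Your key identity $C^{d_2-1}G(\phi)\equiv G(\phi^{d_2})\pmod\Omega$, with $C=G(\chi_{d_1}^\ell)\prod_{i\ge3}G(\chi_{d_i}^{\ell_i})$, is correctly derived, but it still contains the unknown and possibly impure factors $G(\chi_{d_i}^{\ell_i})$ for $i\ge3$ and $G(\phi)$, and your proposal never actually eliminates them. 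The tool you invoke cannot do it: a trivial-product identity only asserts $\prod_i G(\chi_{d_i}^{m_i})\in\Omega$ for a full $s$-tuple with $m_1/d_1+\dots+m_s/d_s\equiv0\pmod 1$ and does not involve $G(\phi)$ at all; a nontrivial-product identity expressing $G(\phi)$ would require a full tuple in $\mathcal{B}(\vec{d})$, hence necessarily carries a nontrivial index-$2$ component and merely trades $G(\phi)$ for further unknown Gauss sums. Similarly, pairing your choice $\ell_i$ with $d_i-\ell_i$ removes $\prod_{i\ge3}G(\chi_{d_i}^{\ell_i})$ via $G(\chi)G(\overline{\chi})=\chi(-1)q$, but leaves $G(\phi)G(\phi')$ with $\phi\phi'=\chi_{d_1}^{2\ell}$, for which no hypothesis is available. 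So ``one isolates $G(\chi_{d_1}^\ell)^N\in\Omega$'' is asserted rather than proved, and this elimination is precisely the hard point of the proposition.

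The device the paper uses, and which is missing from your plan, is to cancel the auxiliary Gauss sums \emph{before} taking the product over $\ell_2$: since $\gcd(d_1,\dots,d_s)>2$ and $s\neq 3$, Lemma~\ref{item240} provides $(m_3,\dots,m_s)$ with $m_3/d_3+\dots+m_s/d_s\equiv0\pmod1$, and multiplying the two pure Jacobi sums with auxiliary exponents $(m_3,\dots,m_s)$ and $(-m_3,\dots,-m_s)$ makes the factors $G(\chi_{d_i}^{m_i})G(\chi_{d_i}^{-m_i})=\chi_{d_i}^{m_i}(-1)q\in\Omega$ collapse, yielding the junk-free relation $\bigl(G(\chi_{d_1}^{\ell_1})G(\chi_{d_2}^{\ell_2})/G(\chi_{d_1}^{\ell_1}\chi_{d_2}^{\ell_2})\bigr)^2\in\Omega$ for all $\ell_1,\ell_2$. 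After that, your product-over-$\ell_2$, Davenport--Hasse and iteration steps go through verbatim (the extra square is harmless, since $\Omega$ is closed under taking roots). Without some such conjugation trick, your $s\ge4$ argument does not close.
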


\begin{proof}
	We will prove that $G\big(\chi_{d_1}^\ell\big)$ is pure for all $\ell\in\Z$. If $\ell\equiv 0\pmod{d_1}$, then the result follows directly by item \ref{item531} of Lemma~\ref{item501}. Assume that $\ell\not\equiv 0\pmod{d_1}$. Since $\gcd(d_1,\dots,d_s)>2$ and $s\neq 3$, it follows from Lemma~\ref{item240} that there exist a $(s-2)$-tuple $(m_3,\dots,m_{s})\in\Z^{s-2}$, with $1\leq m_i\leq d_i-1$ for all $i=3,\dots,s$, such that
		$$\frac{m_3}{d_3}+\dots+\frac{m_{s}}{d_{s}}\equiv 0\pmod{1}.$$
Then, by Theorem~\ref{item523} and  Lemmas~\ref{item501} and \ref{item528}, we have that
\begin{equation}\label{item527}
	\tfrac{G\big(\chi_{d_{1}}^{\ell_{1}}\big)^2 G\big(\chi_{2}^{\ell_2}\big)^2 }{G\big(\chi_{d_{1}}^{\ell_{1}}\chi_{d_{2}}^{\ell_{2}}\big)^2}=\tfrac{J\big(\chi_{d_1}^{\ell_1},\chi_{d_{2}}^{\ell_{2}},\chi_{d_{2}}^{m_{3}},\dots,\chi_{d_s}^{m_s}\big)J\big(\chi_{d_1}^{\ell_1},\chi_{d_{2}}^{\ell_{2}},\chi_{d_{3}}^{-m_{3}},\dots,\chi_{d_s}^{-m_s}\big)}{q^{\frac{s-2}{2}}\chi_{d_3}(-1)\dots\chi_{d_s}(-1)}\in\Omega
\end{equation}
for all $1\leq \ell_{1}< d_{1}$ and $1\leq \ell_2< d_2$. By Lemmas~~\ref{item501}, \ref{item525}, \ref{item528} and \ref{item529}, we have that
$$\prod_{\ell_2=1}^{d_2-1}\tfrac{G\big(\chi_{d_{1}}^{\ell_{1}}\big)^2 G\big(\chi_{2}^{\ell_2}\big)^2}{G\big(\chi_{d_{1}}^{\ell_{1}}\chi_{d_{2}}^{\ell_{2}}\big)^2}=\lambda\tfrac{G\big(\chi_{d_{1}}^{\ell_{1}}\big)^{2d_2}}{G\big(\chi_{ d_{1}}^{\ell_1 d_2}\big)^2}$$
where $\lambda\in\Omega$. In particular, it follows from Lemma~\ref{item529} that
\begin{equation}\label{item530}
	\tfrac{G\big(\chi_{d_{1}}^{\ell_{1}}\big)^{d_2}}{G\big(\chi_{d_{1}}^{\ell_1 d_2}\big)}\in\Omega
\end{equation}
for all $1\leq \ell_{1}< d_{1}$. Now we fix an element $\ell=1,\dots,d_1-1$ in order to prove that $G(\chi_{d-1}^{\ell})$ is pure. We split the proof into two cases:
\begin{itemize}
	\item There exists an integer $u\geq 1$ such that $\ell d_2^u\equiv 0\pmod{d_1}$. In this case, we employ Equation~\eqref{item530}, Lemma~\ref{item528} and item \ref{item531} of Lemma~\ref{item501} and obtain
	$$-G\big(\chi_{d_{1}}^{\ell}\big)=\prod_{j=0}^{u-1} \tfrac{G\Big(\chi_{d_{1}}^{\ell d_2^j}\Big)^{d_2^{-j}}}{G\Big(\chi_{d_{1}}^{\ell d_2^{j+1}}\Big)^{d_2^{-(j+1)}}}\in\Omega.$$
	\item There exist integer $u> 0$ and $v>u $ such that $\ell d_2^v\equiv \ell d_2^u\pmod{d_1}$. Ewe employ Equation~\eqref{item530} and Lemma~\ref{item528} in order to obtain
	\begin{equation}\label{item532}
		G\Big(\chi_{d_{1}}^{\ell d_2^u}\Big)^{d_2^{-u}-d_2^{-v}}=\tfrac{G\Big(\chi_{d_{1}}^{\ell d_2^u}\Big)^{d_2^{-u}}}{G\Big(\chi_{d_{1}}^{\ell d_2^{v}}\Big)^{d_2^{-v}}}=\prod_{j=u}^{v-1} \tfrac{G\Big(\chi_{d_{1}}^{\ell d_2^j}\Big)^{d_2^{-j}}}{G\Big(\chi_{d_{1}}^{\ell d_2^{j+1}}\Big)^{d_2^{-(j+1)}}}\in\Omega.
	\end{equation}
	Furthermore, Equation~\eqref{item530} and Lemma~\ref{item528} entail that
	\begin{equation}\label{item533}
	\tfrac{G\big(\chi_{d_{1}}^{\ell}\big)}{G\Big(\chi_{d_{1}}^{\ell d_2^{u}}\Big)^{d_2^{-(u)}}}=\prod_{j=0}^{u-1} \tfrac{G\Big(\chi_{d_{1}}^{\ell d_2^j}\Big)^{d_2^{-j}}}{G\Big(\chi_{d_{1}}^{\ell d_2^{j+1}}\Big)^{d_2^{-(j+1)}}}\in\Omega.
	\end{equation}
Lemma~\ref{item528} and Equations \eqref{item532} and \eqref{item533} imply that $G\big(\chi_{d_{1}}^{\ell}\big)\in\Omega$. 
\end{itemize}
The cases where $i=2,\dots,s$ follow similarly.
\end{proof}

The following theorem is one of the most important results of the paper.
\begin{theorem}\label{item537}
	Let $s\neq 3$ be an integer and suppose $\gcd(d_1,\dots,d_s)>2$ if $s>3$. If $J\big(\chi_{d_1}^{\ell_1},\dots,\chi_{d_s}^{\ell_s}\big)$ is pure for all $(\ell_1,\dots,\ell_s)\in\mathcal{B}(\vec{d})$, then each $d_i$ is $(p,r)$-admissible.
\end{theorem}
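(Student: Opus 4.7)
The plan is to derive Theorem~\ref{item537} as a short corollary of Proposition~\ref{item534} together with Evans' criterion (Theorem~\ref{item520}). All of the heavy lifting has already been done in Proposition~\ref{item534}, and the present statement is essentially the arithmetic translation of the Gauss sum purity it produces.

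First I would apply Proposition~\ref{item534} directly: its hypotheses on $s$ and on $\gcd(d_1,\dots,d_s)$ are identical to those of the current theorem, and its input assumption ``$J(\chi_{d_1}^{\ell_1},\dots,\chi_{d_s}^{\ell_s})$ is pure for every $(\ell_1,\dots,\ell_s)\in\mathcal{B}(\vec{d})$'' is exactly what is given. Its output is that $G(\chi_{d_j}^\ell)\in\Omega$ for every $j\in\{1,\dots,s\}$ and every $\ell\in\Z$; that is, every power of every $\chi_{d_j}$ gives a pure Gauss sum.

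Next, I would fix $j\in\{1,\dots,s\}$ and feed this into Theorem~\ref{item520}. In the main case $d_j>2$, Evans' criterion immediately delivers a positive integer $r_j$ such that $d_j$ is $(p,r_j)$-admissible, which is the desired conclusion for this index.

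The only edge case is $d_j=2$, and under our hypotheses this can occur only when $s=2$: for $s>3$ the assumption $\gcd(d_1,\dots,d_s)>2$ forces each $d_i>2$, and $s=3$ is excluded. In that case, $d_j=2\mid q-1$ forces $p$ to be odd, whence $2\mid p+1$ and $d_j=2$ is trivially $(p,1)$-admissible. I do not anticipate any real obstacle beyond this bookkeeping: the genuine content lies in Proposition~\ref{item534}, while Theorem~\ref{item537} merely reinterprets that content through Evans' theorem, with the $d=2$ exclusion of Theorem~\ref{item520} dispatched by the direct observation above.
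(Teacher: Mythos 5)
There is a genuine gap. The conclusion of Theorem~\ref{item537} is not merely that each $d_j$ is $(p,r_j)$-admissible for some exponent $r_j$ depending on $j$; it asserts (and the way it is used afterwards requires) that a \emph{single} $r$ works simultaneously for all $i$. Indeed, Theorem~\ref{item506}, which is invoked immediately after in the proof of Theorem~\ref{item504}, needs all the $d_i$ to be $(p,r)$-admissible for one common $r$, and the quantity $\varepsilon=(-1)^{n/2r}$ and the condition ``$\tfrac{n}{2r}$ is even'' are meaningless without it. Your argument stops exactly where the real work begins: applying Proposition~\ref{item534} and Evans' criterion (Theorem~\ref{item520}) index by index only yields exponents $r_1,\dots,r_s$ that a priori may differ. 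Since the admissibility exponent is uniquely determined by $d_j$ and $p$ (it is the minimal $r$ with $d_j\mid p^r+1$), distinct exponents are a genuine possibility that must be excluded by an argument: for instance, over $\F_{2^4}$ both $3$ and $5$ divide $q-1$, yet $3$ is $(2,1)$-admissible while $5$ is $(2,2)$-admissible, so purity of each family of Gauss sums separately cannot force a common $r$ by itself.

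This is precisely what the bulk of the paper's proof is devoted to. After the step you reproduce, the paper again uses Lemma~\ref{item240} (under the hypotheses $s\neq 3$, and $\gcd(d_1,\dots,d_s)>2$ when $s>3$) to produce a subtuple $(m_3,\dots,m_s)$ with $\tfrac{m_3}{d_3}+\dots+\tfrac{m_s}{d_s}\equiv 0\pmod 1$, writes the pure Jacobi sum $J\big(\chi_{d_1}^{\ell_1},\chi_{d_2}^{\ell_2},\chi_{d_3}^{m_3},\dots,\chi_{d_s}^{m_s}\big)$ via Theorem~\ref{item523} as a quotient of Gauss sums, and deduces that $G\big(\chi_{d_1}^{\ell_1}\chi_{d_2}^{\ell_2}\big)\in\Omega$ for all $\ell_1,\ell_2$. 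This gives purity of $G\big(\chi_D^t\big)$ for all $t$ with $D=\lcm(d_1,d_2)$, whence $D$ is $(p,r)$-admissible by Theorem~\ref{item520}, from which the paper concludes $r_1=r_2=r$ and then iterates over pairs to get $r_1=\dots=r_s$. Your proposal contains none of this, so as written it proves only the weaker per-index statement, which does not suffice for the intended conclusion or for its later use. (Your handling of the $d_j=2$ edge case is fine, but it is peripheral to the missing point.)
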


\begin{proof}
	By Theorem~\ref{item520} and Proposition~\ref{item534}, it follows that for each $i=1,\dots,s$ there exists an positive integer $r_i$ such that $d_i$ is $(p,r_i)$-admissible. Since $\gcd(d_1,\dots,d_s)>2$ and $s\neq 3$, it follows from Lemma~\ref{item240} that there exist a $(s-2)$-tuple $(m_3,\dots,m_{s})\in\Z^{s-2}$, with $1\leq m_i\leq d_i-1$ for all $i=3,\dots,s$, such that
	$$\frac{m_3}{d_3}+\dots+\frac{m_{s}}{d_{s}}\equiv 0\pmod{1}.$$
	By Theorem~\ref{item523}, we have that
	$$\tfrac{G\big(\chi_{d_1}^{\ell_1}\big)G\big(\chi_{d_2}^{\ell_2}\big)G\big(\chi_{d_3}^{m_3}\big)\dots G\big(\chi_{d_s}^{m_s}\big)}{G\big(\chi_{d_1}^{\ell_1}\chi_{d_2}^{\ell_2}\big)}=J\big(\chi_{d_1}^{\ell_1},\chi_{d_{2}}^{\ell_{2}},\chi_{d_{2}}^{m_{3}},\dots,\chi_{d_s}^{m_s}\big)\in\Omega$$
	for all $1\leq \ell_1<d_1$ and $1\leq \ell_2<d_2$ and then, by Lemma~\ref{item528} and Proposition~\ref{item534}, it follows that 
	\begin{equation}\label{item535}
		G\big(\chi_{d_1}^{\ell_1}\chi_{d_2}^{\ell_2}\big)\in\Omega
	\end{equation}
	 for all $1\leq \ell_1<d_1$ and $1\leq \ell_2<d_2$. We set $D=\lcm(d_1,d_2)$. Let $\ell_1$ and $\ell_2$ be integers such that $$\big(\tfrac{q-1}{d_1}\big)\ell_1+\big(\tfrac{q-1}{d_2}\big)\ell_2=\gcd\big(\tfrac{q-1}{d_1},\tfrac{q-1}{d_2}\big)=\tfrac{q-1}{D}.$$
	We will prove the following claim.
	
	\textit{Claim:} $G\big(\chi_{d_1}^{t \ell_1}\chi_{d_2}^{t \ell_2}\big)=G\big(\chi_{D}^t)$ is pure for all $t\in\Z$. 
	
	\textit{Proof of the claim:} If $t\equiv 0\pmod{d_1}$ or $t\equiv 0\pmod{d_2}$, then the claim follows by Proposition~\ref{item534}. Assume that $t\not\equiv 0\pmod{d_1}$ or $t\not\equiv 0\pmod{d_2}$. In this case, Equation~\eqref{item535} states that $G\big(\chi_{D}^t)$ is pure, which completes the proof of the claim.
	
	We observe that the claim and Theorem~\ref{item520} imply that there exists an integer $r$ such that $D$ is $(p,r)$-admissible. In particular, it follows that $r_1=r_2=r$. By using the same arguments, we prove that $r_1=\dots=r_s$ and our result follows.
\end{proof}

With the results obtained in this section, we are able to complete the proof of our main result.

\subsection{Proof of Theorem~\ref{item504}}  Assume that the bound \eqref{item502} is attained, then Lemma~\ref{item536} and Theorem~\ref{item537} state that each $d_i$ is $(p,r)$-admissible. In particular, since $d_i|(p^n-1)$, it follows that $2r|n$. Therefore, we are under the hypothesis of Theorem~\ref{item506}, which states that the bound \eqref{item502} is attained if and only if
\begin{equation}\label{item538}
	\prod_{i=1}^{s}(d_i-1)^{\nu_i(b)}=\prod_{i=1}^{s}(d_i-1),
\end{equation}
where $\nu_{i}(b)=\theta_{d_i}(a_i,b)$. Therefore, by definition of $\theta_{d_i}(a_i,b)$ and Equation~\eqref{item538}, we have that $\chi_{d_i}(a_i)=\chi_{d_i}(b)$ for all $i=1,\dots,s$.
 
For the converse, assume that $d_i$ is $(p,r)$-admissible (and then $n$ is even) and suppose that $\chi_{d_i}(a_i)=\chi_{d_i}(b)$ for all $i=1,\dots,s$. Since $\chi_{d_i}(a_i)=\chi_{d_i}(b)$, there exists $c_i\in\Fq$ such that $\tfrac{a_i}{b}=c_i^{d_i}$. Therefore, we can make a change of variables by replacing $c_i x_i$ by $ y_i$ in Equation~\eqref{item500} so that $\Xa(\Fq)$ equals the number of solutions of the equation
$$y_1^{d_1}+\cdots+y_s^{d_s}=1.$$
Hence, by Theorem~\ref{item506}, we have that
\begin{equation}\label{item541}
	\begin{aligned}
		\Xa(\Fq)&=q^{s-1}-\varepsilon^{s+1}\sqrt{q}^{s-2}\left(\sqrt{q}\prod_{i=1}^{s}(1-d_i)-\sum_{j=1}^{\sqrt{q}-\varepsilon}\prod_{i=1}^{s}(1-d_i)^{\delta_{i,j}}\right)\\
		&=q^{s-1}-\varepsilon^{s+1}\sqrt{q}^{s-2}\left(\sqrt{q}\prod_{i=1}^{s}(1-d_i)-\tfrac{(\sqrt{q}-\varepsilon)}{\sqrt{q}-\varepsilon}\sum_{m=0}^{\sqrt{q}-\varepsilon}\prod_{d_i|m} (1-d_i)\right)\\
		&=q^{s-1}-\varepsilon^{s+1}\sqrt{q}^{s-2}(-1)^s\left(\sqrt{q}\prod_{i=1}^{s}(d_i-1)-(\sqrt{q}-\varepsilon)I(d_1,\dots,d_s)\right),\\
	\end{aligned}
\end{equation}
where the last equality follows by Lemma~\ref{item539}. From here, we observe that it is necessary and sufficient that $(-1)^{\frac{n}{2r}}=\varepsilon=1$, which it is equivalent to $\tfrac{n}{2r}$ being even. Therefore the maximality and minimality depend only on the sign $-(-1)^s$, which completes the proof. $\hfill\qed$

\printbibliography

\end{document}